\documentclass[11pt,a4paper,reqno,intlimits]{amsart}

\usepackage{amssymb}
\usepackage{color}
\usepackage[mathscr]{euscript}
\usepackage{xspace}
\usepackage{epsfig,rotating}
\usepackage{enumerate}
\usepackage[round]{natbib}

\usepackage[bookmarksopen,pdfstartview=FitH]{hyperref}
\hypersetup{colorlinks,%
            citecolor=blue,%
            filecolor=blue,%
            linkcolor=blue,%
            urlcolor=blue}

\theoremstyle{plain}
\newtheorem{theorem}{Theorem}[section]
\newtheorem{lemma}[theorem]{Lemma}
\newtheorem{corollary}[theorem]{Corollary}

\theoremstyle{definition}

\newtheorem{example}[theorem]{Example}

\newtheorem{remark}[theorem]{Remark}

\newtheorem*{assud}{Assumption ($\mathbb{D}$)}
\newcommand{\scal}[2]{\ensuremath{\langle #1, #2 \rangle}}

\def\E{\mathrm{I\kern-.2em E}}

\def\P{\mathrm{I\kern-.2em P}}
\def\R{\mathbb{R}}
\def\C{\mathbb{C}}
\def\F{\mathcal{F}}

\def\Rn{\mathbb{R}^n}

\def\ud{\mathrm{d}}
\def\dy{\mathrm{d}y}

\def\dv{\mathrm{d}v}

\def\e{\mathrm{e}}
\def\ii{\mathrm{i}}

\def\lev{L\'evy\xspace}


\newcommand{\prazess}[1][X]{{\ensuremath{#1=(#1_t)_{t\ge 0}}}\xspace}

\numberwithin{equation}{section}

\begin{document}

\title{Computation of copulas by Fourier methods}
\author{Antonis Papapantoleon}
\address{Institute of Mathematics, TU Berlin, Stra\ss e des 17. Juni 136,
         10623 Berlin, Germany}
\email{papapan@math.tu-berlin.de}

\keywords{Copula, copula density function, moment generating function, Fourier
          transform, multidimensional stochastic process}
\subjclass[2010]{62H05, 60E10}
\thanks{The author thanks E. A. von Hammerstein for helpful comments and
        suggestions.}
\date{}\maketitle\pagestyle{myheadings}\frenchspacing\setcounter{tocdepth}{1}

\begin{abstract}
We provide an integral representation for the (implied) copulas of dependent 
random variables in terms of their moment generating functions. The proof uses 
ideas from Fourier methods for option pricing. This representation can be used 
for a large class of models from mathematical finance, including L\'evy and 
affine processes. As an application, we compute the implied copula of the NIG 
L\'evy process which exhibits notable time-dependence.
\end{abstract}

\section{Introduction}

Copulas provide a complete characterization of the dependence structure between 
random variables and link in a very elegant way the joint distribution with the
marginal distributions via Sklar's Theorem. However, they are a rather static 
concept and do not blend well with stochastic processes which can be used to
describe the random evolution of dependent quantities, e.g. the evolution of
several stock prices. Therefore other methods to create dependence in stochastic
models have been developed. Multivariate stochastic processes spring 
immediately to mind, for example L\'evy or affine processes (cf. e.g.
\citealt{Sato99}, \citealt*{DuffieFilipovicSchachermayer03},
\citealt*{Cuchiero_Filipovic_Mayerhofer_Teichmann_2011} or \citealt*{MuhleKarbe_Pfaffel_Stelzer_2012}), while in mathematical 
finance models using time-changes or linear mixture models have been developed; 
see e.g. \citet{LucianoSchoutens06}, \citet{Luciano_Semeraro_2010}, 
\citet{Kawai09}, \citet{EberleinMadan09} or \citet{Khanna_Madan:2009}, to 
mention just a small part of the existing literature. In these approaches 
however the copula is typically not known explicitly. Another very interesting 
approach is due to \citet{KallsenTankov04}, who introduced L\'evy copulas to 
characterize the dependence structure of L\'evy processes.

In this note, we provide a new representation for the (implied) copula of a 
multidimensional random variable in terms of its moment generating function. The 
derivation of the main result borrows ideas from Fourier methods for option 
pricing, and the motivation stems from the knowledge of the moment generating 
function in most of the aforementioned models. This paper is organized as 
follows: in Section \ref{copula} we provide the representation of the copula in 
terms of the moment generating function; the results are proved for random 
variables for simplicity, while stochastic processes are considered as a 
corollary. In Section \ref{examples} we provide two examples to showcase how 
this method can be applied, for example, in performing sensitivity analysis of 
the copula with respect to the parameters of the model. Finally, Section 
\ref{remarks} concludes with some remarks.

\section{Copulas via Fourier transform methods}
\label{copula}

Let $\Rn$ denote the $n$-dimensional Euclidean space, $\scal{\cdot}{\cdot}$ the 
Euclidean scalar product and $\R^n_{-}$ the negative orthant, i.e. 
$\R^n_{-}=\{x\in\R^n: x_i<0 \,\, \forall i\}$. We consider a random variable 
$X=(X_1,\dots,X_n)^\top\in\Rn$ defined on a probability space $(\Omega,\F,\P)$. 
We denote by $F$ the cumulative distribution function (cdf) of $X$ and by $f$ 
its probability density function (pdf). Let $C$ denote the copula of $X$ and $c$ 
its copula density function. Analogously, let $F_i$ and $f_i$ denote the cdf and 
pdf respectively of the marginal $X_i$, for all $i\in\{1,\dots,n\}$. In 
addition, we denote by $F_i^{-1}$ the generalized inverse of $F_i$, i.e. 
$F_i^{-1}(u)=\inf\{v\in\R: F_i(v)\ge u\}$.

We denote by $M_X$ the (extended) moment generating function of $X$:
\begin{align}\label{MGF}
M_X(u) = \E\big[\e^{\scal{u}{X}}\big],
\end{align}
for all $u\in\C^n$ such that $M_X(u)$ exists. Let us also define the set 
\begin{align*}
\mathcal{I} = 
\big\{ R\in\Rn: M_X(R)<\infty \text{ and } M_X(R+\ii\cdot)\in L^1(\Rn) \big\}.
\end{align*}
In the sequel, we will assume that the following condition is in force.

\begin{assud}
$\mathcal{R}:=\mathcal{I}\cap\Rn_{-}\neq\emptyset$.
\end{assud}

\begin{remark}
The integrability of the moment generating function required by Assumption 
$(\mathbb{D})$ has the following implications:
\begin{enumerate}[(a)]
\item the distribution function $F$ is absolutely continuous with respect to 
      the Lebesgue measure;
\item the density function $f$ is bounded and continuous;
\item the marginal distribution functions $F_i$ are also absolutely continuous.      
\end{enumerate}
See \citet[Proposition 2.5]{Sato99} for (a) and (b) and \citet[Theorem 
12.2]{Jacod_Protter_2003} for (c).
\end{remark}

\begin{theorem}\label{copula-thm}
Let $X$ be a random variable that satisfies Assumption $(\mathbb{D})$. The 
copula of $X$ is provided by
\begin{align}\label{eq-cop}
C(u)
 &= \frac{1}{(-2\pi)^n} \int_{\R^n} M_{X}(R+\ii v)
     \frac{\e^{-\langle R+\ii v,x\rangle}}
    {\prod_{i=1}^n(R_i+\ii v_i)}\dv \Big|_{x_i=F_i^{-1}(u_i)},
\end{align}
where $u\in[0,1]^n$ and $R\in\mathcal{R}$.
\end{theorem}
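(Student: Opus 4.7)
The plan is to reduce the claim, via Sklar's theorem, to a Fourier representation of the joint cdf $F$, and then to obtain that representation from the classical dampened--indicator trick familiar from Fourier methods in option pricing. Since $C(u)=F\bigl(F_1^{-1}(u_1),\dots,F_n^{-1}(u_n)\bigr)$, once one shows
\[
 F(x) \;=\; \frac{1}{(-2\pi)^n}\int_{\R^n} M_X(R+\ii v)\,
    \frac{\e^{-\scal{R+\ii v}{x}}}{\prod_{i=1}^n(R_i+\ii v_i)}\,\dv
\]
for $R\in\mathcal R$, the theorem follows by evaluating at $x_i=F_i^{-1}(u_i)$.

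To derive the cdf representation, I would first work in one dimension. The function $y\mapsto\mathbf 1_{\{y\le x\}}$ is not integrable, but for $R<0$ the dampened version $y\mapsto \e^{-Ry}\mathbf 1_{\{y\le x\}}$ is integrable on $\R$, and a direct computation of its Fourier transform yields
\[
 \int_{\R}\e^{-(R+\ii v)y}\mathbf 1_{\{y\le x\}}\,\dy \;=\; -\frac{\e^{-(R+\ii v)x}}{R+\ii v},
\]
the boundary term at $-\infty$ vanishing precisely because $R<0$. Fourier inversion then expresses $\mathbf 1_{\{y\le x\}}$ as an integral of $\e^{(R+\ii v)y}$ against the kernel $-\e^{-(R+\ii v)x}/(R+\ii v)$ divided by $2\pi$. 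Taking tensor products over the coordinates produces the $n$-dimensional identity
\[
 \prod_{i=1}^n\mathbf 1_{\{y_i\le x_i\}} \;=\; \frac{1}{(-2\pi)^n}\int_{\R^n}\e^{\scal{R+\ii v}{\,y-x\,}}\,
    \frac{1}{\prod_{i=1}^n(R_i+\ii v_i)}\,\dv,
\]
valid for any $R\in\R^n_-\setminus\{0\}$ with all components strictly negative.

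The final step is to take $y=X$ and apply $\E[\,\cdot\,]$ on both sides. After exchanging expectation and integration, the inner expectation is $\E[\e^{\scal{R+\ii v}{X}}]=M_X(R+\ii v)$, and one recovers exactly \eqref{eq-cop}. The exchange is the crucial analytic step: it is legitimate by Fubini's theorem because (H1) makes $\E[\e^{\scal{R}{X}}]$ finite, while (H2) gives $v\mapsto M_X(R+\ii v)\in L^1(\R^n)$, so the integrand is jointly absolutely integrable against $\P\otimes\dv$. Assumption (H3) simply guarantees that a single $R$ can be chosen which is simultaneously admissible for (H1), (H2), and lies in the negative orthant where the Fourier transform of the dampened indicator converges.

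The main obstacle, and the reason for requiring (H2) rather than only (H1), is the justification of the Fourier inversion and of Fubini at the same $R$: one must avoid any component $R_i=0$ (otherwise the pole $1/(R_i+\ii v_i)$ destroys integrability near $v_i=0$) and one must know that the characteristic function of the dampened law is in $L^1$, which in turn ensures that $F$ has a continuous density so that the inversion formula holds pointwise and the evaluation at $x_i=F_i^{-1}(u_i)$ is unambiguous. Once these integrability issues are settled, the rest is bookkeeping: multiplying out the sign $(-1)^n$ to get $(-2\pi)^n$ in the denominator, and applying Sklar's theorem to move from $F$ to $C$.
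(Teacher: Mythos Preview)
Your overall architecture matches the paper's: reduce to $F$ via Sklar, compute the Fourier transform of the dampened indicator, and recognise $M_X(R+\ii v)$ inside the integral. The computation of $\widehat g$ and the final formula are correct.

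There is, however, a genuine analytic gap in the middle. You write the indicator itself as a Fourier integral and then swap $\E$ and $\int\dv$, claiming joint absolute integrability against $\P\otimes\dv$. But the integrand whose modulus Fubini needs is
\[
\Bigl|\tfrac{\e^{\langle R+\ii v,\,X-x\rangle}}{\prod_i(R_i+\ii v_i)}\Bigr|
=\frac{\e^{\langle R,\,X-x\rangle}}{\prod_i|R_i+\ii v_i|},
\]
and after taking $\E$ this is $M_X(R)\,\e^{-\langle R,x\rangle}\big/\prod_i|R_i+\ii v_i|$, which is \emph{not} integrable in $v$ (each factor decays only like $1/|v_i|$). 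Hypothesis (H2) tells you $M_X(R+\ii v)\in L^1$, but $M_X(R+\ii v)$ is what appears \emph{after} the exchange, not before; it plays no role in the Fubini bound. For the same reason the pointwise identity you display for $\prod_i\mathbf 1_{\{y_i\le x_i\}}$ is not literally true: the Fourier transform of the dampened indicator, $-\e^{-(R+\ii v)x}/(R+\ii v)$, is not in $L^1(\R)$, so classical inversion does not reproduce the indicator as an absolutely convergent integral.

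The paper avoids this by invoking a Parseval-type pricing formula (Theorem~3.2 in Eberlein--Glau--Papapantoleon): one pairs $g_R(y)=\e^{\langle R,y\rangle}g(y)\in L^1(\Rn)$ with the tilted density of $X$, whose Fourier transform $M_X(R+\ii\cdot)$ is in $L^1$ by (H2), and obtains $\E[g(X)]=\tfrac{1}{(2\pi)^n}\int M_X(R+\ii v)\,\widehat g(\ii R-v)\,\dv$ directly. Equivalently, you can repair your argument by reversing the roles: use (H2) to write the \emph{density} $f_X$ via Fourier inversion, then integrate against $\mathbf 1_{\{y\le x\}}$; now the Fubini integrand has modulus $\mathbf 1_{\{y\le x\}}\e^{-\langle R,y\rangle}|M_X(R+\ii v)|$, which \emph{is} jointly integrable because $g_R\in L^1$ and $M_X(R+\ii\cdot)\in L^1$. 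Either route closes the gap.
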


\begin{proof}
Assumption $(\mathbb{D})$ implies that $F_1,\dots,F_n$ are continuous and we 
know from Sklar's Theorem that the copula of $X$ is unique and provided by 
\begin{align}\label{Sklar}
C(u_1,\dots,u_n)
 = F\big(F_1^{-1}(u_1),\dots,F_n^{-1}(u_n)\big);
\end{align}
see e.g. \citet*[Theorem 5.3]{McNeilFreyEmbrechts05} for a proof in this 
setting and \citet{Rueschendorf_2009} for an elegant proof in the general case.

We will evaluate the joint cdf $F$ using the methodology of Fourier methods for
option pricing. That is, we will think of the cdf as the `price' of a digital
option on several fictitious assets. Let us define the function
\begin{align}
g(y) = 1_{\{y_1\le x_1,\dots,y_n\le x_n\}}(y),
\quad x,y\in\Rn,
\end{align}
and denote by $\widehat{g}$ its Fourier transform. Then we have that
\begin{align}\label{cdf-Fourier}
F(x)
 &= \P(X_1\le x_1,\dots,X_n\le x_n) \nonumber\\
 &= \E\big[1_{\{X_1\le x_1,\dots,X_n\le x_n\}}\big]
  = \E[g(X)] \nonumber\\
 &= \frac{1}{(2\pi)^n} \int_{\R^n} M_{X}(R+\ii v) \widehat{g}(\ii R-v)\dv,
\end{align}
where we have applied Theorem 3.2 in \cite*{EberleinGlauPapapantoleon08}. The
prerequisites of this theorem are satisfied due to Assumption $(\mathbb{D})$ 
and because $g_R\in L^1(\Rn)$, where $g_R(x):=\e^{-\langle R,x\rangle}g(x)$ for
$R\in\Rn_{-}$.

Finally, the statement follows from \eqref{Sklar} and \eqref{cdf-Fourier} once
we have computed the Fourier transform of $g$. We have for $R_i<0$, 
$i\in\{1,\dots,n\}$,
\begin{align}
\widehat{g}(\ii R-v)
 &= \int_{\R^n} \e^{\ii\langle \ii R-v,y\rangle} g(y)\dy \nonumber\\
 &= \int_{\R^n} \e^{\ii\langle \ii R-v,y\rangle}
     1_{\{y_1\le x_1,\dots,y_n\le x_n\}}\dy \nonumber\\
 &= \prod_{i=1}^n \int_{-\infty}^{x_i} \e^{(-R_i-\ii v_i)y_i}
    \ud y_i\nonumber\\
 &= (-1)^n \prod_{i=1}^n 
     \frac{\e^{-(R_i+\ii v_i)x_i}}{R_i+\ii v_i},
\end{align}
which concludes the proof.
\end{proof}

\begin{remark}
If the moment generating function of the marginals is known, the inverse
function can be easily computed numerically. We have that
\begin{align*}
F_i^{-1}(u)
 &=\inf\{v\in\R: F_i(v)\ge u\} \\
 &= \inf\{v\in\R: \E\big[1_{\{X_i\le v\}}\big]\ge u\},
\end{align*}
where the expectation can be computed using \eqref{cdf-Fourier} again, while a
root finding algorithm provides the infimum (using the continuity of $F_i$). 
\end{remark}

We can also compute the copula density function using Fourier methods, which
resembles the computation of Greeks in option pricing.

\begin{lemma}\label{copula-lemma}
Let $X$ be a random variable that satisfies Assumption $(\mathbb{D})$ and 
assume further that the marginal distribution functions $F_1,\dots,F_n$ are 
strictly increasing and continuously differentiable. Then, the copula density 
function $c$ of $X$ is provided by
\begin{align}\label{eq-cop-dens}
c(u)
 &= \frac{1}{(2\pi)^n \prod_{i=1}^nf_i(x_i)}
     \int_{\R^n} M_X(R+\ii v)\, \e^{-\langle R+\ii v,x\rangle}\dv
      \Big|_{x_i=F_i^{-1}(u_i)},
\end{align}
where $u\in(0,1)^n$ and $R\in\mathcal{R}$.
\end{lemma}

\begin{proof}
The distribution functions $F$ and $F_1,\dots,F_n$ are absolutely continuous 
hence the copula density exists, cf. \citet[p.~197]{McNeilFreyEmbrechts05}. Let 
$u\in(0,1)^n$, then we have that $x_i=F_i^{-1}(u_i)$ is finite for every 
$i\in\{1,\dots,n\}$, hence $\e^{-\langle R,x\rangle}$ is bounded. Using 
Assumption $(\mathbb{D})$ we get that the function $M_X(R+\ii v) \e^{-\langle 
R+\ii v,x\rangle}$ is integrable and we can interchange differentiation and 
integration. Then we have that
\begin{align}\label{cop-dens-step-1}
c(u)
 &= \frac{\partial^n}{\partial u_1\dots\partial u_n} C(u_1,\dots,u_n)
    \nonumber\\
 &= \frac{\partial^n}{\partial u_1\dots\partial u_n}
    \frac{1}{(-2\pi)^n} \int_{\R^n} M_{X}(R+\ii v)
     \frac{\e^{-\langle R+\ii v,x\rangle}}
     {\prod_{i=1}^n(R_i+\ii v_i)}\Big|_{x_i=F_i^{-1}(u_i)}\dv
    \nonumber\\
 &= \frac{1}{(-2\pi)^n} \int_{\R^n} 
     \frac{M_{X}(R+\ii v)}{\prod_{i=1}^n(R_i+\ii v_i)}
     \frac{\partial^n}{\partial u_1\dots\partial u_n}
      \e^{-\langle R+\ii v,x\rangle}\Big|_{x_i=F_i^{-1}(u_i)}\dv.
\end{align}
Now, since the marginal distribution functions are continuously differentiable,
using the chain rule and the inverse function theorem we get that
\begin{multline}\label{cop-dens-step-2}
\frac{\partial^n}{\partial u_1\dots\partial u_n}
\left(\e^{-\langle R+\ii v,x\rangle} \Big|_{x_i=F_i^{-1}(u_i)}\right)\\
 = (-1)^n \prod_{i=1}^n(R_i+\ii v_i) \e^{-\langle R+\ii v,x\rangle}
    \frac{1}{\prod_{i=1}^n f_i(x_i)}\Big|_{x_i=F_i^{-1}(u_i)},
\end{multline}
which combined with \eqref{cop-dens-step-1} yields the required result.
\end{proof}

A natural application of these representations is for the calculation of the 
copula of a random variable $X_t$ from a multidimensional stochastic process 
$X=(X_t)_{t\ge0}$. There are many examples of stochastic processes where the 
corresponding characteristic functions are known explicitly. Prominent examples 
are \lev processes, self-similar additive (`Sato') processes and affine 
processes.

\begin{corollary}
Let \prazess be an $\Rn$-valued stochastic process on a basis
$(\Omega,\F,(\F_t)_{t\ge0},\P)$. Assume that the random variable $X_t$, $t\ge0$,
satisfies Assumption $(\mathbb{D})$. Then, the copula of $X_t$ is provided by
\begin{align}
C_t(u)
 &= \frac{1}{(-2\pi)^n} \int_{\R^n} M_{X_t}(R+\ii v)
     \frac{\e^{-\langle R+\ii v,x\rangle}}
          {\prod_{i=1}^n(R_i+\ii v_i)}\dv
\Big|_{x_i=F_{X_t^i}^{-1}(u_i)},
\end{align}
where $u\in[0,1]^n$ and $R\in\mathcal{R}$. An analogous statement holds for the
copula density function $c_t$ of $X_t$.
\end{corollary}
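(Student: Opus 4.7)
The plan is to reduce the corollary immediately to Theorem \ref{copula-thm}, since by hypothesis the random variable $X_t$ (for each fixed $t \ge 0$) is an $\Rn$-valued random vector on $(\Omega,\F,\P)$ satisfying the very assumptions (H1)--(H3) under which the theorem was proved. In other words, nothing about the filtration, path regularity, or process structure is actually used; only the joint law of the marginal vector at time $t$ enters the formula, through its moment generating function $M_{X_t}$ and the one-dimensional cdfs $F_{X_t^i}$ of its components.

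Concretely, I would first fix $t \ge 0$ and set $Y := X_t$, viewed as a random variable in $\Rn$. The assumptions (H1)--(H3) on $X_t$ translate verbatim into the hypotheses of Theorem \ref{copula-thm} for $Y$, with the same index set $\mathcal{R}$. Applying the theorem yields
\begin{align*}
C_Y(u)
 &= \frac{1}{(-2\pi)^n} \int_{\R^n} M_{Y}(R+\ii v)
     \frac{\e^{-\langle R+\ii v,x\rangle}}
    {\prod_{i=1}^n(R_i+\ii v_i)}\dv \Big|_{x_i=F_{Y_i}^{-1}(u_i)},
\end{align*}
which is precisely the stated formula once one substitutes back $Y = X_t$ and $Y_i = X_t^i$. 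The copula $C_t$ of $X_t$ is by definition $C_Y$, so this finishes the proof of the first claim.

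For the copula density statement, I would repeat the same substitution in Lemma \ref{copula-lemma}: applying it to $Y = X_t$, and using that each marginal density $f_{X_t^i}(x_i)$ is well-defined under (H2) (which ensures absolute continuity of the marginal cdfs via integrability of $M_{X_t}(R+\ii\cdot)$), we obtain the analogous representation for $c_t$. There is essentially no obstacle here; the corollary is a repackaging, and its value lies in the observation that the entire machinery is driven purely by knowledge of $M_{X_t}$, which is explicitly available for the classes of processes mentioned (\lev, Sato, and affine).
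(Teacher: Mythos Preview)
Your proposal is correct and matches the paper's approach: the corollary is stated without proof precisely because it is an immediate application of Theorem \ref{copula-thm} (and Lemma \ref{copula-lemma} for the density) to the random variable $Y=X_t$ for fixed $t\ge0$. Your observation that the process structure plays no role is exactly the point.
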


\section{Examples}
\label{examples}

We will demonstrate the applicability and flexibility of Fourier methods for 
the computation of copulas using two examples. First we consider a 2D normal 
random variable and next a 2D normal inverse Gaussian (NIG) \lev process. 
Although the copula of the normal random variable is the well-known Gaussian 
copula, little was known about the copula of the NIG distribution until 
recently; see Theorem 5.13 in \citet{Schmidt03} for a special case. 
\citet[Chapter 2]{Hammerstein_2011} has now provided a general characterization
of the (implied) copula of the multidimensional NIG distribution using properties
of normal mean-variance mixtures.

\begin{example}\label{ex-Gaussian}
The first example is simply a `sanity check' for the proposed method. We
consider the 2-dimensional Gaussian distribution and compute the corresponding
copula for correlation values equal to $\rho=\{-1,0,1\}$; see Figure
\ref{normal} for the resulting contour plots. Of course, the copula of this
example is the Gaussian copula, which for correlation coefficients equal to
$\{-1,0,1\}$ corresponds to the \emph{countermonotonicity} copula, the
\emph{independence} copula and the \emph{comonotonicity} copula respectively.
This is also evident from Figure \ref{normal}.

\begin{figure}
 \centering
  \includegraphics[width=4.cm]{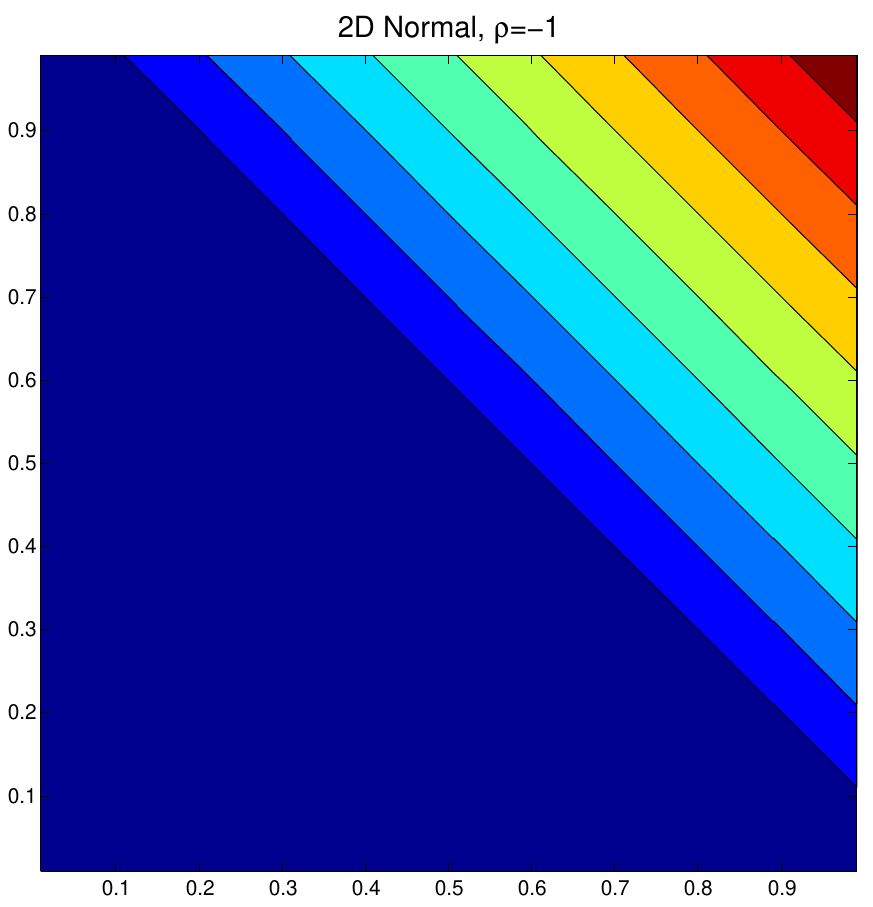}
  \includegraphics[width=4.cm]{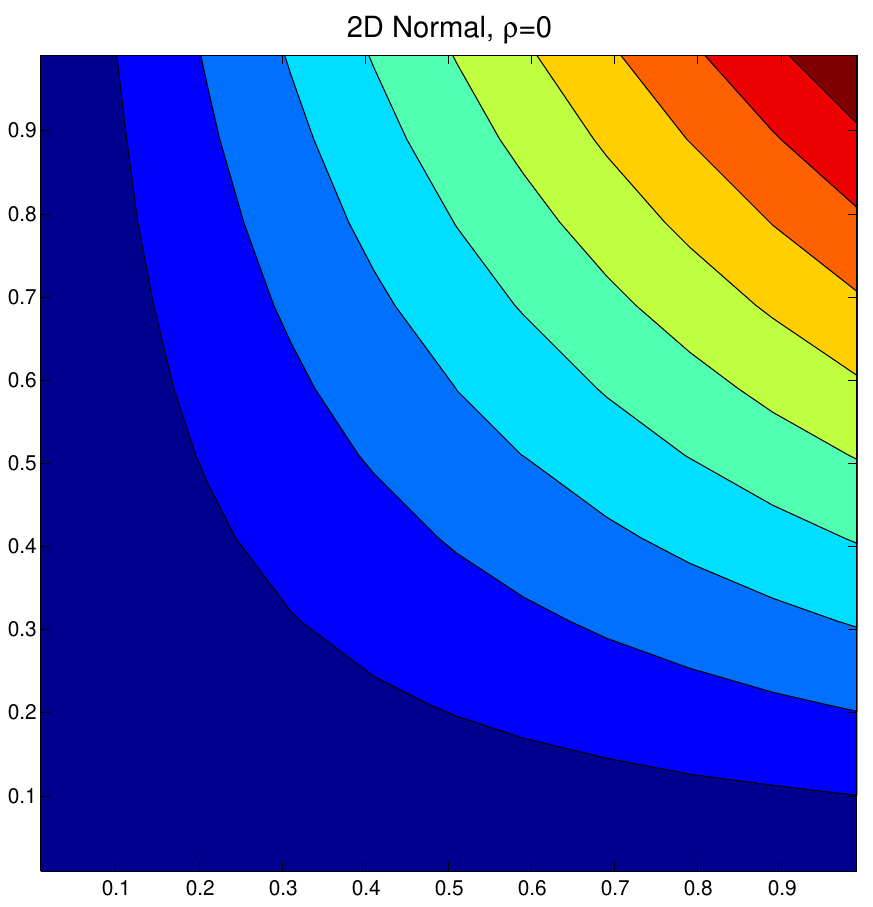}
  \includegraphics[width=4.cm]{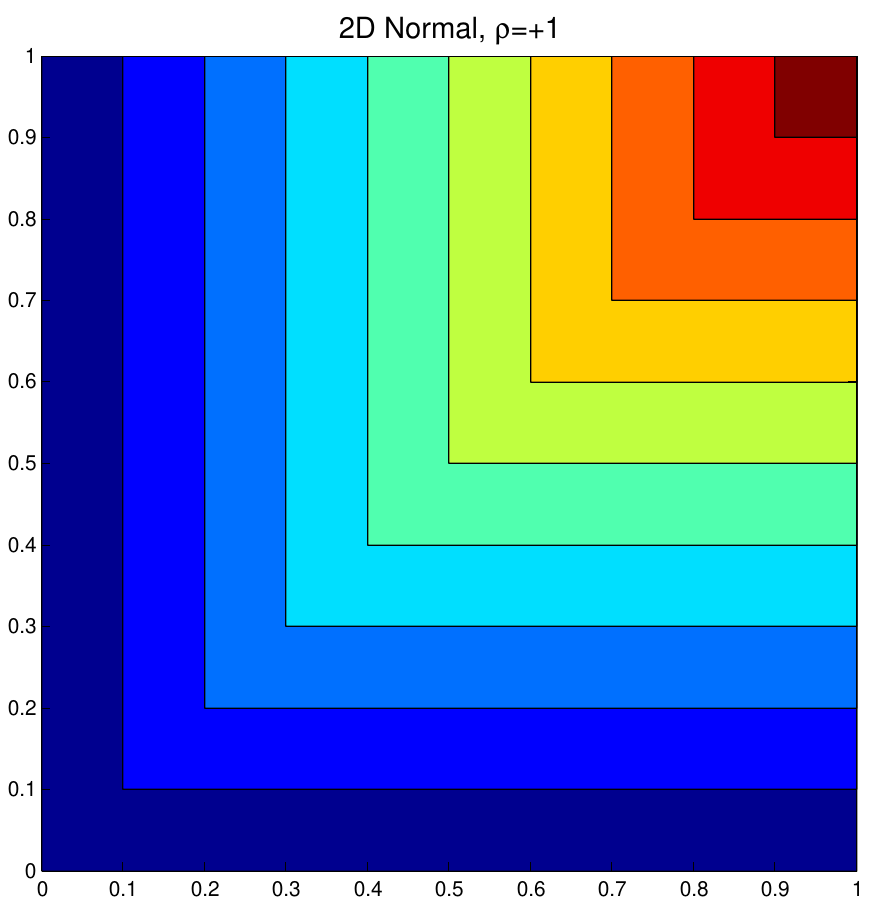}
 \caption{Contour plots of copulas for Example \ref{ex-Gaussian}.}
 \label{normal}
\end{figure}
\end{example}

\begin{example}
Let \prazess be a 2-dimensional NIG \lev process, i.e.
\begin{align}
X_t=(X_t^1,X_t^2)\sim\text{NIG}_2(\alpha,\beta,\delta t,\mu t,\Delta),
\quad t\ge0.
\end{align}
The parameters satisfy: $\alpha,\delta>0$, $\beta,\mu\in\R^2$, and
$\Delta\in\R^{2\times2}$ is a symmetric, positive definite matrix (w.l.o.g. we
can assume $\text{det}(\Delta)=1$). Moreover,
$\alpha^2>\scal{\beta}{\Delta\beta}$. The moment generating function of $X_1$,
for $u\in\R^2$ with $\alpha^2-\scal{\beta+u}{\Delta(\beta+u)}\ge0$, is
\begin{align*}
M_{X_1}(u)
 &= \exp\left( \langle u,\mu\rangle 
       + \delta\left(\sqrt{\alpha^2-\langle \beta,\Delta\beta\rangle}
          -\sqrt{\alpha^2-\langle\beta+u,\Delta(\beta+u)\rangle}\right)\right),
\end{align*}
cf. \citet{Barndorff-Nielsen98}. The marginals are also NIG distributed and we
have that
$X_t^i\sim\text{NIG}(\hat\alpha^i,\hat\beta^i,\hat\delta^it,\hat\mu^it)$, where
\begin{align*}
\hat\alpha^i =
\sqrt{\frac{\alpha^2-\beta_j^2(\delta_{jj}-\delta_{ij}^2\delta_{ii}^{-1})}
     {\delta_{ii}}},\,\,
\hat\beta^i  = \beta_i+\beta_j\delta_{ij}^2\delta_{ii}^{-1},\,\,
\hat\delta^i = \delta\sqrt{\delta_{ii}},\,\,
\hat\mu^i    = \mu_i, 
\end{align*}
for $i=\{1,2\}$ and $j=\{2,1\}$; cf. e.g. \citet[Theorem~1]{Blaesild81}.
Assumption $(\mathbb{D})$ is satisfied for $R\in\R^2_{-}$ such that
$\alpha^2-\scal{\beta+R}{\Delta(\beta+R)}\ge0$; see Appendix B in
\citet{EberleinGlauPapapantoleon08}. Hence $\mathcal{R}\ne\emptyset$.

Therefore, we can apply Theorem \ref{copula-thm} to compute the copula of the
NIG distribution. The parameters used in the numerical example are similar to
\citet[pp.~233-234]{EberleinGlauPapapantoleon08}: $\alpha=10.20$,
$\beta=\bigl(\begin{smallmatrix}-3.80\\-2.50\end{smallmatrix}\bigr)$,
$\delta=0.150$, $\mu\equiv0$, and two matrices
$\Delta^+=\bigl(\begin{smallmatrix}1&0\\0&1\end{smallmatrix}\bigr)$ and
$\Delta^-=\bigl(\begin{smallmatrix}1&-1\\-1&2\end{smallmatrix}\bigr)$, which
lead to positive and negative correlation. The correlation
coefficients are $\rho_+=0.1015$ and $\rho_-=-0.687$ respectively. 

The contour plots are exhibited in Figures \ref{NIG_T=1} and \ref{NIG_T=12} and 
show clearly the influence of the different mixing matrices $\Delta^+$ and 
$\Delta^-$ to the dependence structure. Moreover, we can also observe that time 
has a significant effect on the dependence structure of the multidimensional 
NIG L\'evy process. This is an interesting observation, since the correlation 
matrix is invariant over time (which is true for any L\'evy process).

\begin{figure}[h]
 \centering
  \includegraphics[width=5.75cm]{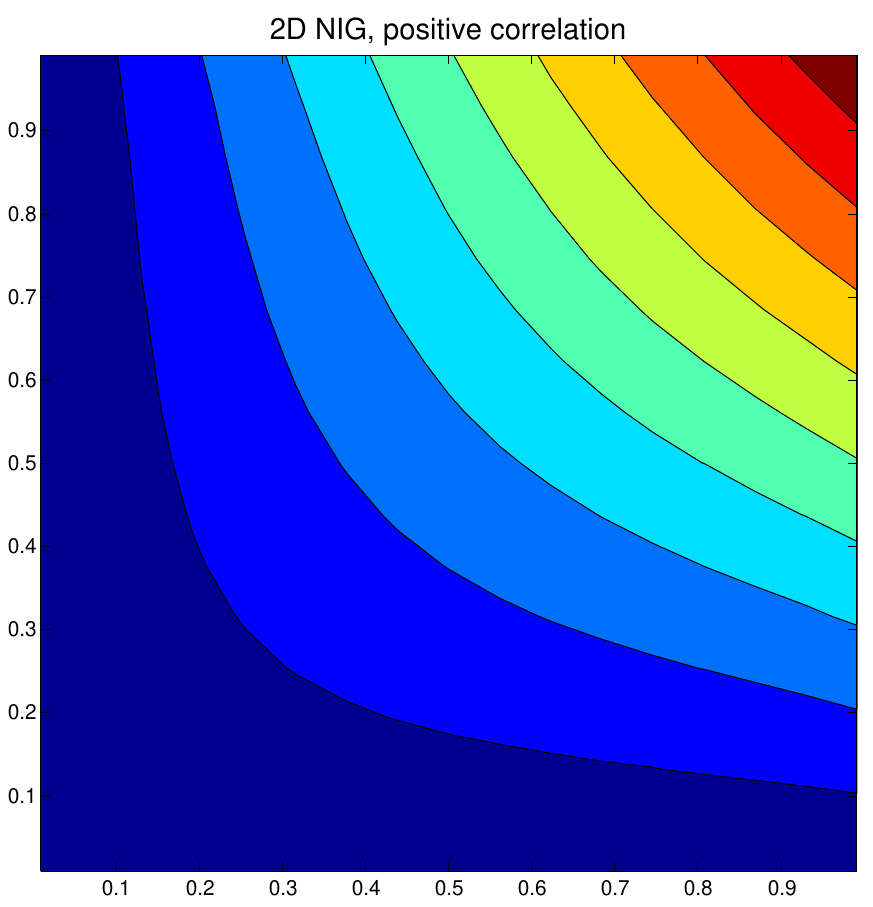}
  \includegraphics[width=5.75cm]{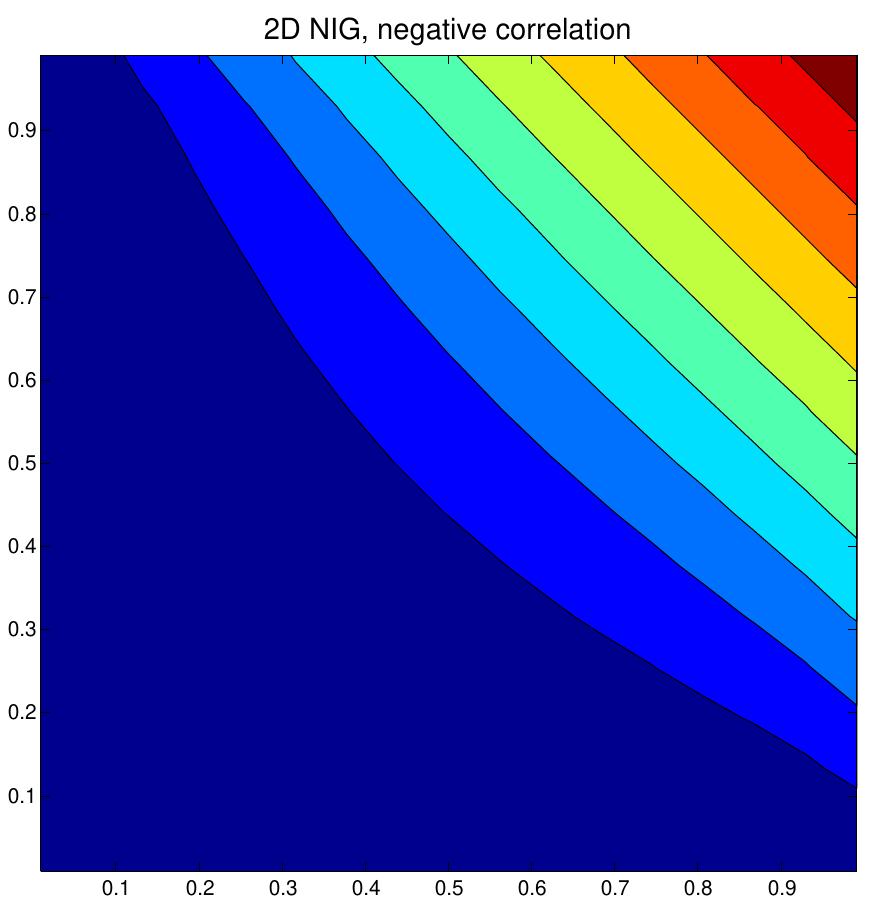}
 \caption{Contour plots of copulas for NIG, $t=1$.}
 \label{NIG_T=1}
\end{figure}
\begin{figure}[h]
 \centering
  \includegraphics[width=5.75cm]{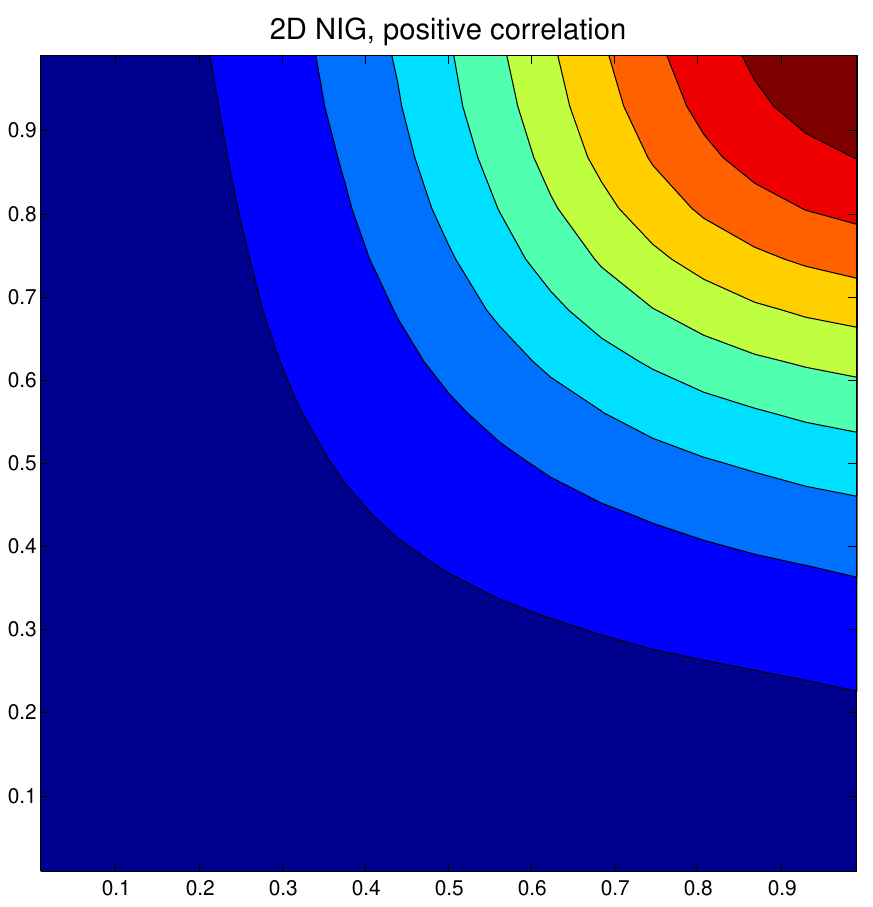}
  \includegraphics[width=5.75cm]{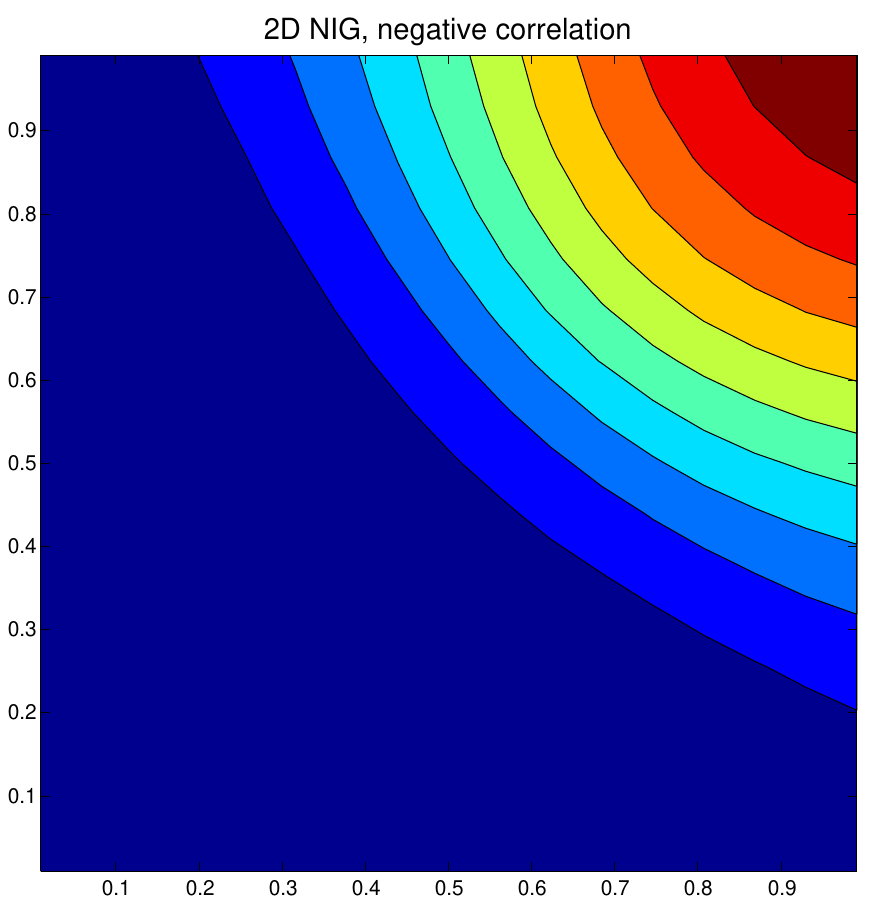}
 \caption{Contour plots of copulas for NIG, $t=\frac12$.}
 \label{NIG_T=12}
\end{figure}
\end{example}

\section{Final remarks}
\label{remarks}

We will not elaborate on the speed of Fourier methods compared with Monte Carlo
methods in the multidimensional case; the interested reader is refered to
\citet{HurdZhou09} for a careful analysis. Moreover, \citet{Villiger_2007}
provides recommendations on the efficient implementation of Fourier integrals
using sparse grids in order to deal with the `curse of dimensionality'. Let us 
point out though that the computation of the copula function will be much 
quicker than the computation of the copula density, since the integrand in
\eqref{eq-cop} decays much faster than the one in \eqref{eq-cop-dens}. One
should think of the analogy to option prices and option Greeks again. Finally,
it seems tempting to use these formulas for the computation of tail dependence
coefficients. However, due to numerical instabilities at the limits, they did
not yield any meaningful results.

\bibliographystyle{abbrvnat}
\bibliography{references}

\end{document}